\newtheorem{theorem}{Theorem}
\newtheorem{lemma}{Lemma}
\newtheorem{corollary}{Corollary}
\newtheorem{example}{Example}
\newcommand{\R}{{\mathbb{R}}}
\newcommand{\Bd}{\mathrm{bd\,}}
\newcommand{\Cl}{\mathrm{cl\,}}
\newcommand{\Conv}{\mathrm{conv\,}}
\newcommand{\Dim}{\mathrm{dim\,}}
\newcommand{\Int}{\mathrm{int\,}}
\newcommand{\Lin}{\mathrm{lin\,}}
\newcommand{\Rec}{\mathrm{rec\,}}
\begin{document}

\begin{center} {\Large \textbf{Convex sets with homothetic
projections}}

\bigskip

\textbf{Valeriu~Soltan}

\medskip

Department of Mathematical Sciences, George Mason University

4400 University Drive, Fairfax, VA 22030, USA

vsoltan@gmu.edu

\end{center}

\smallskip

\begin{quote}
\noindent \textbf{Abstract.} Nonempty sets $X_1$ and $X_2$ in the
Euclidean space $\R^n$ are called \textit{homothetic} provided
$X_1 = z + \lambda X_2$ for a suitable point $z \in \R^n$ and a
scalar $\lambda \ne 0$, not necessarily positive. Extending
results of S\"uss and Hadwiger (proved by them for the case of
convex bodies and positive $\lambda$), we show that compact
(respectively, closed) convex sets $K_1$ and $K_2$ in $\R^n$ are
homothetic provided for any given integer $m$, $2 \le m \le n - 1$
(respectively, $3 \le m \le n - 1$), the orthogonal projections of
$K_1$ and $K_2$ on every $m$-dimensional plane of $\R^n$ are
homothetic, where homothety ratio may depend on the projection
plane. The proof uses a refined version of Straszewicz's theorem
on exposed points of compact convex sets.

\smallskip

\noindent \textit{AMS Subject Classification:} 52A20.

\smallskip

\noindent \textit{Keywords:} Convex set, exposed points,
homothety, projection.
\end{quote}

\vskip0.5cm

\section{Introduction and main results}

\noindent Let us recall that nonempty sets $X_1$ and $X_2$ in the
Euclidean space $\R^n$ are \textit{homothetic} provided $X_1 = z +
\lambda X_2$ for a suitable point $z \in \R^n$ and a scalar
$\lambda \ne 0$ (called homothety ratio); furthermore, $X_1$ and
$X_2$ are called \textit{positively} homothetic (respectively,
\textit{negatively} homothetic) provided $\lambda > 0$
(respectively, $\lambda < 0$). We remark that in convex geometry
homothety usually means positive homothety, also called direct
homothety. In a standard way, a convex body in $\R^n$ is a compact
convex set with nonempty interior.

S\"uss~\cite{sus29,sus32} proved that a pair of convex bodies in
$\R^n$, $n \ge 3$, are positively homothetic if and only if the
orthogonal projections of these bodies on every hyperplane are
positively homothetic, where homothety ratio may depend on the
projection hyperplane (the proof of this statement is given for $n
= 3$ with the remark in \cite[p.\,49]{sus32} that the extension to
higher dimensions is routine). Following a series of intermediate
results (see~\cite{sol05b} for additional references),
Hadwiger~\cite{had63} showed that convex bodies $K_1$ and $K_2$ in
$\R^n$ are positively homothetic if and only if there is an
integer $m$, $2 \le m \le n - 1$, such that the orthogonal
projections of $K_1$ and $K_2$ on each $m$-dimensional plane are
positively homothetic (see also Rogers~\cite{rog65} for the case
$m = 2$).

The question whether the statements of S\"uss and Hadwiger hold
for larger families of geometric transformations in $\R^n$, like
similarities, was posed by Nakajima~\cite[p.\,169]{nak30} for $n =
3$ and independently by Petty and McKinney \cite{p-m87} and
Golubyatnikov~\cite{gol88}. Gardner and Vol\v ci\v c \cite{g-v94}
showed the existence of a pair of centered and coaxial convex
bodies of revolution in $\R^n$ whose orthogonal projections on
every 2-dimensional plane are similar, but which are not
themselves even affinely equivalent. On the other hand,
Golubyatnikov~\cite{gol88,gol89} proved that compact convex sets
$K_1$ and $K_2$ in $\R^n$ are homothetic (positively or
negatively) provided their projections on every 2-dimensional
plane are similar and have no rotation symmetries.

Our first theorem shows that the family of positive homotheties in
S\"uss's and Hadwiger's statements can be extended to all
homotheties in $\R^n$.

\begin{theorem} \label{th1}
Given compact $($respectively, closed$)$ convex sets $K_1$ and
$K_2$ in $\R^n$ and an integer $m$, $2 \le m \le n - 1$
$($respectively,  $3 \le m \le n - 1)$, the following conditions
are equivalent:
\begin{itemize}

\item[$1)$] $K_1$ and $K_2$ are homothetic,

\item[$2)$] the orthogonal projections of $K_1$ and $K_2$ on every
$m$-dimensional plane of $\R^n$ are homothetic, where homothety
ratio may depend on the projection plane.

\end{itemize}

\end{theorem}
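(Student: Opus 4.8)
\emph{Plan of proof.} The implication $1)\Rightarrow 2)$ is trivial: if $K_1=z+\lambda K_2$ then $P_L(K_1)=P_L(z)+\lambda P_L(K_2)$ with the same ratio $\lambda$, where $P_L$ denotes orthogonal projection onto an $m$-plane $L$. For $2)\Rightarrow 1)$ my plan is to reconstruct the homothety from the exposed points. After disposing of degenerate cases (if one of $K_1,K_2$ is a point so is the other, since each $m$-plane projection of the other must be a point; the lower-dimensional cases are routine), assume $K_1,K_2$ are full-dimensional and not points. Call a unit vector $u$ \emph{good} if each of the four supporting faces of $K_1$ and $K_2$ in the directions $u$ and $-u$ is a single point; the set $D$ of good $u$ has full measure on the sphere, its complement consisting of the directions at which one of the four functions $h_{K_i}(\pm\cdot)$ fails to be differentiable, a set of measure zero. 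For $u\in D$ let $a_i(u)$ be the point of $K_i$ at which $h_{K_i}(u)$ is attained. I would first establish a refinement of Straszewicz's theorem: for every full-measure set of directions, the points of a compact convex set exposed by directions from that set are dense in the set of all its exposed points. This follows from Straszewicz's theorem together with the upper semicontinuity of the face map $u\mapsto F_K(u)$, which lets one nudge a given exposing direction into the prescribed set while the exposed point moves only slightly. Hence $K_i$ is the closed convex hull of $\{a_i(u):u\in D\}$, and it suffices to find $z\in\R^n$ and a single $\lambda\ne 0$ with $a_1(u)=z+\lambda\,a_2(u)$ for all $u\in D$ in case $\lambda>0$, or $a_1(u)=z+\lambda\,a_2(-u)$ for all $u\in D$ in case $\lambda<0$.

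The next step uses the identity $F_{P_L(K)}(u)=P_L(F_K(u))$, valid when $u$ is parallel to $L$, which together with $P_L(K_1)=z_L+\lambda_L P_L(K_2)$ yields, for good $u$ parallel to $L$, either $P_L(a_1(u))=z_L+\lambda_L P_L(a_2(u))$ (if $\lambda_L>0$) or $P_L(a_1(u))=z_L+\lambda_L P_L(a_2(-u))$ (if $\lambda_L<0$). Writing this for $u$ and for $-u$ and subtracting cancels $z_L$ and the sign, leaving $P_L\bigl(a_1(u)-a_1(-u)\bigr)=|\lambda_L|\,P_L\bigl(a_2(u)-a_2(-u)\bigr)$ for every $m$-plane $L$ parallel to $u$. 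Pairing with $u$ shows $|\lambda_L|$ equals the width ratio $w_{K_1}(u)/w_{K_2}(u)$; since any two directions lie in a common $m$-plane, this ratio is a constant $\mu_0>0$ and $|\lambda_L|\equiv\mu_0$. Substituting $\mu_0$ back, the vector $a_1(u)-a_1(-u)-\mu_0\bigl(a_2(u)-a_2(-u)\bigr)$ is orthogonal to $u$ and has zero projection on every $m$-plane parallel to $u$, hence is zero; so $g(u):=a_1(u)-\mu_0\,a_2(u)$ satisfies $g(u)=g(-u)$ for all $u\in D$.

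The hard part will be the coherence of the signs: showing that, after possibly replacing $K_2$ by $-K_2$, \emph{every} $m$-plane projection is \emph{positively} homothetic. On the open set of $m$-planes $L$ with $P_L(K_2)$ not centrally symmetric, $\mathrm{sign}(\lambda_L)$ is determined and, by continuity of the homothety ratio of a continuously varying pair of non-centrally-symmetric convex bodies, locally constant; if that set is all of the Grassmannian, its connectedness finishes the step, and if it is empty then $K_2$ (hence also $K_1$, being homothetic to it on every projection) is centrally symmetric, and after translating each body to be symmetric about the origin one is again in the positive case. What remains — the genuine obstacle — is to forbid the sign from flipping across the exceptional planes on which $P_L(K_2)$ is centrally symmetric, and this is exactly where I expect the refined Straszewicz theorem to do the real work: to choose exposed points whose exposing directions avoid those planes and to carry the homothety relation across them. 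In the closed unbounded case, where $3\le m\le n-1$, I would run the whole argument on the truncations of $K_1,K_2$ by balls of large radius and then let the radius tend to infinity, the hypothesis $m\ge 3$ being precisely what keeps the recession cones matched.

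Once every projection is positively homothetic with ratio $\mu_0$, the relation above becomes $P_L(g(u))=z_L$ for all $m$-planes $L$ and all good $u$ parallel to $L$, so $P_L\bigl(g(u)-g(v)\bigr)=0$ for all good $u,v$ parallel to $L$. Letting $L$ range over all $m$-planes containing the plane through $u$ and $v$ forces $g(u)=g(v)$ (for $m=2$, fix $u$ and vary $v$ to see that $\langle g(v),u\rangle$ is independent of $v$, then use that the good directions span $\R^n$). Therefore $g$ is a constant $z$, i.e. $a_1(u)=z+\mu_0\,a_2(u)$ for every $u\in D$, and taking closed convex hulls gives $K_1=z+\mu_0 K_2$, as required.
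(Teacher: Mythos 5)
Your reduction machinery in the compact case is fine as far as it goes (the identity $\pi_L(F_{K}(u))=F_{\pi_L(K)}(u)$ for $u$ parallel to $L$, the constancy of $|\lambda_L|=\mu_0$ via widths, and the recovery of $K_i$ from points exposed in a full-measure set of directions are all workable), but the proof stops exactly at the crux. Everything after the words ``Once every projection is positively homothetic'' is conditional on the sign-coherence step, and for that step you offer only a local-constancy/connectedness remark plus the hope that a refined Straszewicz theorem ``will do the real work.'' The genuinely hard configuration is the mixed one: the locus of $m$-planes $L$ with $\pi_L(K_2)$ centrally symmetric may be a nonempty proper subset whose complement is disconnected, so $\mathrm{sign}(\lambda_L)$ can a priori be $+$ on some components and $-$ on others, and nothing in your outline rules this out or shows how to pass the relation $a_1(u)=z+\mu_0 a_2(u)$ (versus $a_1(u)=z-\mu_0 a_2(-u)$) across the symmetric locus. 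This is precisely the difficulty the paper isolates in the introduction ($z_L$ and $\lambda_L$ lose continuity at centrally symmetric projections) and which occupies the bulk of its proof: instead of making the sign of $\lambda_L$ coherent, it normalizes $K_1,K_2$ to share one exposed diameter $[x_0,z_0]$ centered at $o$, proves via a delicate perturbation of the projection direction (steps 3--4, with the $\varepsilon_1,\varepsilon_2,\varepsilon_3$ estimates) that every exposed diameter of $K_2$ coincides with an exposed diameter of $K_1$ or of $-K_1$, and then uses Theorem~\ref{th2} together with Lemma~\ref{lemma-a1} (no two distinct exposed diameters are parallel) to exclude a mixture, concluding $K_2=K_1$ or $K_2=-K_1$. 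You would need an argument of comparable substance at this point; as written, the proposal has a gap at the theorem's main difficulty.

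The unbounded case is a second genuine gap. Truncating $K_1,K_2$ by large balls does not fit the hypothesis: the orthogonal projections of $K_i\cap B_R$ are not truncations of $\pi_L(K_i)$ and need not be homothetic for any $R$, and your width-ratio normalization collapses anyway because widths are infinite in most directions for unbounded sets. Moreover, Example~\ref{ex1} shows $m\ge 3$ is not a technicality one can absorb into a limiting argument --- it must enter in an essential structural way. The paper instead proves $\Lin K_1=\Lin K_2$, splits off the lineality space, matches recession cones of projections onto $m$-dimensional subspaces through a fixed line, and invokes the result of~\cite{sol05b} (valid for $m\ge 3$) for line-free sets; some argument of this type, not a truncation limit, is needed.
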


The following example shows that the inequality $m \ge 3$ in
Theorem~\ref{th1} is sharp for the case of unbounded convex sets.

\begin{example} \label{ex1}
\textnormal{Let $K_1$ and $K_2$ be solid paraboloids in $\R^3$,
given, respectively, by
\[
K_1 = \{(x,y,z) \mid x^2 + y^2 \le z\} \quad \mathrm{and} \quad
K_2 = \{(x,y,z) \mid 2x^2 + y^2 \le z\}.
\]
Obviously, $K_1$ and $K_2$ are not homothetic. At the same time,
their orthogonal projections $\pi_L(K_1)$ and $\pi_L(K_2)$ on
every 2-dimensional plane $L \subset \R^3$ are positively
homothetic. Indeed, if $L = \{(x,y,z) \mid z = \text{const} \}$,
then $\pi_L(K_1) = \pi_L(K_2) = L$. For any other 2-dimensional
plane $L$ in $\R^3$, the sets $\pi_L(K_1)$ and $\pi_L(K_2)$ are
closed convex sets bounded by parabolas whose axes of symmetry are
parallel to the orthogonal projection of the $z$-axis on $L$.
Since any two parabolas in the plane with parallel axes of
symmetry are homothetic, the sets $\pi_L(K_1)$ and $\pi_L(K_2)$
also are positively homothetic.}
\end{example}

In view of this example, it would be interesting to describe the
pairs of closed convex sets $K_1$ and $K_2$ in $\R^n$ such that
the orthogonal projections of $K_1$ and $K_2$ on every
$2$-dimensional plane of $\R^n$ are homothetic. The following
corollary slightly refines Theorem~\ref{th1}.

\begin{corollary} \label{cor1}
Given compact $($respectively, closed$)$ convex sets $K_1$ and
$K_2$ in $\R^n$, integers $r$ and $m$ such that $0 \le r \le m - 2
\le n - 3$ $($respectively, $0 \le r \le m - 3 \le n - 4)$, and a
subspace $S \subset \R^n$ of dimension $r$, the following
conditions are equivalent:
\begin{itemize}

\item[$1)$] $K_1$ and $K_2$ are homothetic,

\item[$2)$] the orthogonal projections of $K_1$ and $K_2$ on every
$m$-dimensional plane of $\R^n$ that contains $S$ are homothetic,
where homothety ratio may depend on the projection plane.

\end{itemize}

\end{corollary}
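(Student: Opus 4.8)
The plan is to derive Corollary~\ref{cor1} from Theorem~\ref{th1} applied with the smaller integer $m' = m - r$ in place of $m$. The implication $1)\Rightarrow 2)$ is immediate, since $K_1 = z + \lambda K_2$ forces $\pi_L(K_1) = \pi_L(z) + \lambda\,\pi_L(K_2)$ for every plane $L$; so I only need $2)\Rightarrow 1)$. Since $0 \in S$, every $m$-plane containing $S$ is in fact an $m$-dimensional subspace, and since the orthogonal projection of a set on an affine plane is a translate of its projection on the parallel linear subspace (so that homothety of projections is insensitive to this replacement), I will work throughout with linear subspaces.

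The key observation is that \emph{every $m'$-dimensional subspace $N \subset \R^n$ lies in some $m$-dimensional subspace $L$ with $S \subseteq L$}: the sum $S + N$ is a subspace of dimension at most $\Dim S + \Dim N = r + (m-r) = m \le n$, hence it can be enlarged to an $m$-dimensional subspace $L$, which by construction contains both $S$ and $N$. For such $N$ and $L$ the inclusion $N \subseteq L$ yields the identity $\pi_N = \pi_N \circ \pi_L$, so that $\pi_N(K_i) = \pi_N(\pi_L(K_i))$ for $i = 1, 2$. Writing, as allowed by hypothesis $2)$, $\pi_L(K_1) = z_L + \lambda_L\,\pi_L(K_2)$ with $z_L \in L$ and $\lambda_L \ne 0$, and applying $\pi_N$, we get $\pi_N(K_1) = \pi_N(z_L) + \lambda_L\,\pi_N(K_2)$. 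Thus the orthogonal projections of $K_1$ and $K_2$ on \emph{every} $m'$-dimensional subspace of $\R^n$ are homothetic, with a ratio that may depend on the subspace.

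Now I would invoke Theorem~\ref{th1} with $m'$ in place of $m$, after checking that $m'$ lies in the required range. In the compact case $0 \le r \le m-2 \le n-3$ gives $2 \le m' = m-r \le m \le n-1$; in the closed case $0 \le r \le m-3 \le n-4$ gives $3 \le m' = m-r \le m \le n-1$. In either case $m'$ satisfies the corresponding hypothesis of Theorem~\ref{th1}, so $K_1$ and $K_2$ are homothetic, which is $1)$.

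The proof is short because the substance is carried by Theorem~\ref{th1}; the only thing needing care is the bookkeeping of dimensions --- and in particular the fact that in the closed case Theorem~\ref{th1} requires $m' \ge 3$ rather than $m' \ge 2$ is precisely what forces the hypothesis there to read $r \le m-3$ instead of $r \le m-2$. Degenerate situations (a projection collapsing to a point or to a lower-dimensional set) are harmless: a homothetic copy of a point is a point, and the displayed homothety relations hold verbatim regardless of dimension.
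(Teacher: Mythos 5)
Your proof is correct and follows essentially the same route as the paper: enlarge a lower-dimensional subspace together with $S$ to an $m$-dimensional subspace, use hypothesis 2) there, factor the smaller projection through that subspace, and then invoke Theorem~\ref{th1} with a reduced projection dimension. The only cosmetic difference is that you reduce to dimension $m-r$, whereas the paper reduces directly to dimension $2$ (compact case) or $3$ (closed case); both choices satisfy the hypotheses of Theorem~\ref{th1}, so the bookkeeping works out identically.
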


We observe that the proof of Theorem~\ref{th1} cannot be routinely
reduced to that of S\"uss and Hadwiger by using compactness and
continuity arguments. Indeed, if orthogonal projections
$\pi_L(K_1)$ and $\pi_L(K_2)$ of the convex sets $K_1$ and $K_2$
on a plane $L \subset \R^n$ are homothetic and
\[
\pi_L (K_1) = z_L + \lambda_L \pi_L (K_2),
\]
then $z_L$ and $\lambda_L$ (but not the absolute value of
$\lambda_L$) may loose their continuity as functions of $L$ when
both $\pi_L(K_1)$ and $\pi_L(K_2)$ become centrally symmetric. To
avoid the consideration of centrally symmetric projections, our
proof of Theorem~\ref{th1} uses a refined version of Straszewicz's
theorem on exposed points of a compact convex set (see
Theorem~\ref{th2} below).

Let us recall that a point $x$ of a closed convex set $K \subset
\R^n$ is called \textit{exposed} provided there is a hyperplane $H
\subset \R^n$ supporting $K$ such that $H \cap K = \{ x \}$.
Straszewicz's theorem states that any compact convex set in $\R^n$
is the closed convex hull of its exposed points
(see~\cite{str35}). Klee~\cite{kle57} proved that a line-free
closed convex set $K \subset \R^n$ is the closed convex hull of
its exposed points and exposed halflines (a set is called
\textit{line-free} if it contains no lines).

Points $x$ and $z$ of a compact convex set $K \subset \R^n$ are
called (affinely) \textit{antipodal} provided there are distinct
parallel hyperplanes $H$ and $G$ both supporting $K$ such that $x
\in H \cap K$ and $z \in G \cap K$ (see,
e.\,g.,~\cite{m-s05,sol05a} for various antipodality properties of
convex and finite sets in $\R^n$). Furthermore, the points $x$ and
$z$ are called \textit{antipodally exposed} (and the chord $[x,z]$
is called an \textit{exposed diameter} of $K$) provided the
parallel hyperplanes $H$ and $G$ can be chosen such that $H \cap K
= \{ x \}$ and $G \cap K = \{ z \}$ (see~\cite{n-s94,s-n93}).
Clearly, a compact convex set may have exposed points which are
not antipodally exposed (like the point $x$ in
Figure~\ref{fig-1}).

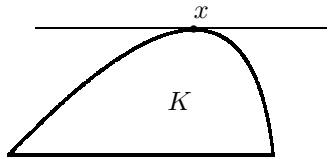
\begin{figure}[h]
\begin{minipage}{11.7cm}

\begin{center}
\begin{picture}(200,55)
 \put(60,48){\line(1,0){110}}
\thicklines \qbezier(50,0)(140,95)(150,00)
\put(50,0){\line(1,0){100}} \put(120,48){\circle{2}}
\put(120,52){$x$} \put(110,17){$K$}
\end{picture}
\end{center}

\end{minipage}
\caption{An exposed point which is not antipodally exposed.}
\label{fig-1}
\end{figure}

\begin{theorem} \label{th2}
Any compact convex set $K \subset \R^n$ distinct from a singleton
is the closed convex hull of its antipodally exposed points.
\end{theorem}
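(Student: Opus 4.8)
The plan is to derive Theorem~\ref{th2} from the classical Straszewicz theorem quoted above, together with a density statement about directions that simultaneously expose a single point on both sides. Write $P$ for the set of antipodally exposed points of $K$. Since $P\subseteq K$ and $K$ is compact convex, $\Cl\Conv P\subseteq K$, so it suffices to prove $K\subseteq\Cl\Conv P$; and since by Straszewicz $K$ is the closed convex hull of its exposed points, it is enough to show that every exposed point of $K$ belongs to $\Cl P$. I would first reduce to the case where $K$ is full-dimensional. Put $A=\Aff K$ and note $\Dim A\ge 1$ because $K$ is not a singleton. If $h,g$ are parallel distinct hyperplanes of $A$ with $h\cap K=\{x\}$ and $g\cap K=\{z\}$, then choosing a linear functional on $\R^n$ whose restriction to the direction space of $A$ is the common normal of $h$ and $g$ produces parallel distinct hyperplanes $H,G$ of $\R^n$ with $H\cap A=h$ and $G\cap A=g$, hence $H\cap K=\{x\}$ and $G\cap K=\{z\}$; conversely the restriction to $A$ of a hyperplane of $\R^n$ that meets $K$ in one point is a hyperplane of $A$ meeting $K$ in that point. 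So the antipodally exposed points of $K$ are the same whether computed in $A$ or in $\R^n$, and it suffices to prove the theorem for $K$ viewed inside $\Aff K$. The case $\Dim K=1$ is immediate ($K$ is a segment, whose two endpoints are antipodally exposed), so assume henceforth $2\le\Dim K=n$.

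For $u\in\R^n$, $u\neq 0$, let $h_K(u)=\max_{y\in K}\langle y,u\rangle$ be the support function and $F(K,u)=\{y\in K:\langle y,u\rangle=h_K(u)\}$ the exposed face in direction $u$. Recall that $h_K$ is convex on $\R^n$, that $F(K,u)$ coincides with the subdifferential of $h_K$ at $u$, and hence that $h_K$ is differentiable at $u$ exactly when $F(K,u)$ is a single point. Since a convex function on $\R^n$ is differentiable outside a set of Lebesgue measure zero, the set
\[
N=\{\,u\in\R^n:\ \|u\|=1,\ F(K,u)\ \text{and}\ F(K,-u)\ \text{are both singletons}\,\}
\]
is the complement in the unit sphere of a null set, and in particular $N$ is dense in the unit sphere. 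For $u\in N$ write $F(K,u)=\{x(u)\}$ and $F(K,-u)=\{z(u)\}$. Because $K$ is full-dimensional, $h_K(u)+h_K(-u)$, the width of $K$ in direction $u$, is positive, so the supporting hyperplanes $\{\langle y,u\rangle=h_K(u)\}$ and $\{\langle y,u\rangle=-h_K(-u)\}$ are parallel and distinct and meet $K$ precisely in $\{x(u)\}$ and $\{z(u)\}$; hence $x(u)\in P$ for every $u\in N$.

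It remains to show that each exposed point of $K$ lies in $\Cl P$. Let $x$ be exposed by a unit vector $u_0$, so that $F(K,u_0)=\{x\}$, and choose $u_k\in N$ with $u_k\to u_0$. Let $x^\ast$ be any accumulation point of the bounded sequence $(x(u_k))$ and pass to a subsequence along which $x(u_k)\to x^\ast$. Then $x^\ast\in K$ and, by continuity of $h_K$,
\[
\langle x^\ast,u_0\rangle=\lim_k\langle x(u_k),u_k\rangle=\lim_k h_K(u_k)=h_K(u_0),
\]
so $x^\ast\in F(K,u_0)=\{x\}$; thus the full sequence satisfies $x(u_k)\to x$. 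Since each $x(u_k)\in P$, we get $x\in\Cl P$. Therefore $K=\Cl\Conv(\{\text{exposed points of }K\})\subseteq\Cl\Conv(\Cl P)=\Cl\Conv P\subseteq K$, which proves the theorem.

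The only substantive point is the density of $N$, which rests on the almost-everywhere differentiability of the support function — equivalently, on the fact that the unit directions failing to expose a unique point of $K$ form a negligible subset of the sphere; the reduction to the full-dimensional case and the passage of supporting hyperplanes from $\Aff K$ to $\R^n$ are routine, so I expect no serious obstacle beyond getting these details right. It is worth recording that $u\mapsto x(u)$ generally does not extend continuously across the non-smooth directions, which is precisely why an exposed point such as the point $x$ in Figure~\ref{fig-1} is merely a limit of antipodally exposed points and need not be one itself; in particular the conclusion cannot be sharpened to "$K=\Conv P$" or to "every exposed point is antipodally exposed".
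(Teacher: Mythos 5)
Your proposal is correct, but it reaches Theorem~\ref{th2} by a genuinely different route than the paper. Both arguments ultimately rest on the fact that the unit directions $u$ for which both supporting hyperplanes orthogonal to $u$ touch $K$ in single points are dense in the sphere; the difference lies in how this density is obtained and how it is converted into the hull statement. The paper proves it constructively and without any measure theory: Lemma~\ref{lemma-21} is a rolling-ball argument producing, near any prescribed direction, a halfspace meeting $K$ in a singleton, and Lemma~\ref{lemma-22} transfers this to \emph{pairs} of opposite singleton contacts by applying Lemma~\ref{lemma-21} to the difference body $K+(-K)$; the theorem then follows from a separation argument (a point outside $\Cl\Conv E$ yields a supporting direction all of whose nearby exposing directions avoid $\Cl\Conv E$, contradicting Lemma~\ref{lemma-22}), so the paper never invokes Straszewicz's theorem itself. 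You instead get the dense set $N$ of good directions from the almost-everywhere differentiability of the support function $h_K$ (using that $F(K,u)$ is the subdifferential of $h_K$ at $u$), then show by a standard limit argument that every exposed point is a limit of antipodally exposed points, and finish by citing Straszewicz. This is shorter if one grants the standard convex-analysis facts, at the price of using measure theory and the classical theorem as black boxes, and it also records the useful observation that exposed points need only lie in the closure of the antipodally exposed ones (consistent with Figure~\ref{fig-1}). Two small points you should make explicit: the a.e.\ differentiability statement is with respect to Lebesgue measure on $\R^n$, and passing to ``null on the unit sphere'' (hence dense complement) uses the positive homogeneity of $h_K$, which makes the non-differentiability set a cone, together with polar coordinates; and in the reduction to $\Aff K$ one should note that a hyperplane of $\R^n$ meeting $K$ in one point cannot contain $\Aff K$ because $K$ is not a singleton, so its trace on $\Aff K$ is indeed a hyperplane there. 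Both are routine, so the proof stands.
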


In what follows, we will need the following lemma.

\begin{lemma} \label{lemma-a1}
No two distinct exposed diameters of a compact convex set $K
\subset \R^n$ are parallel.
\end{lemma}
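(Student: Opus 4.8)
The plan is to argue by contradiction: assume $[x_1,z_1]$ and $[x_2,z_2]$ are two parallel exposed diameters of $K$ and deduce that they must coincide. The starting point is to rephrase the antipodal-exposedness of each pair in terms of linear functionals. By definition there are distinct parallel hyperplanes $H_i,G_i$ supporting $K$ with $H_i\cap K=\{x_i\}$ and $G_i\cap K=\{z_i\}$; choosing a normal $u_i$ so that $K\subseteq\{y:\langle y,u_i\rangle\le\langle x_i,u_i\rangle\}$, the contact conditions say precisely that $x_i$ is the \emph{unique} maximizer of $\langle\cdot,u_i\rangle$ over $K$ and $z_i$ (lying on the opposite parallel supporting hyperplane $G_i$) is the \emph{unique} minimizer. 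Since $H_i\ne G_i$, the points $x_i$ and $z_i$ are distinct and $\langle z_i-x_i,u_i\rangle<0$.

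Next I would exploit the parallelism. Write $z_2-x_2=t\,(z_1-x_1)$ with $t\ne 0$; after interchanging the two endpoints of the second diameter if necessary (which leaves it an exposed diameter, with $H_2$ and $G_2$ swapping roles), we may assume $t>0$. Put $w=z_1-x_1\ne 0$, so $z_2-x_2=tw$, and note $\langle w,u_1\rangle<0$ and, since $t>0$ and $\langle z_2-x_2,u_2\rangle<0$, also $\langle w,u_2\rangle<0$. Now test the points of one pair against the functional of the other: since $x_2,z_2\in K$, optimality of $x_1,z_1$ for $\langle\cdot,u_1\rangle$ gives $\langle x_2,u_1\rangle\le\langle x_1,u_1\rangle$ and $\langle z_2,u_1\rangle\ge\langle z_1,u_1\rangle$, and subtracting yields $t\langle w,u_1\rangle\ge\langle w,u_1\rangle$, hence $t\le 1$ because $\langle w,u_1\rangle<0$. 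Symmetrically, using $x_1,z_1\in K$ and optimality of $x_2,z_2$ for $\langle\cdot,u_2\rangle$, one gets $\langle w,u_2\rangle\ge t\langle w,u_2\rangle$, hence $t\ge 1$. Therefore $t=1$.

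Finally I would upgrade $t=1$ to an actual coincidence of the two chords. With $t=1$ the inequality $\langle z_2,u_1\rangle-\langle x_2,u_1\rangle\ge\langle z_1,u_1\rangle-\langle x_1,u_1\rangle$ becomes an equality, which forces equality in each of its two constituent inequalities $\langle x_2,u_1\rangle\le\langle x_1,u_1\rangle$ and $\langle z_2,u_1\rangle\ge\langle z_1,u_1\rangle$; by the uniqueness of the optima this means $x_1=x_2$ and $z_1=z_2$, so $[x_1,z_1]=[x_2,z_2]$, contradicting that the diameters are distinct.

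The argument is short and essentially self-contained; the only points requiring care are the bookkeeping of signs — orienting each $u_i$ and justifying the strict inequality $\langle z_i-x_i,u_i\rangle<0$ from $H_i\ne G_i$ — and, at the end, the passage from ``same length'' ($t=1$) to ``same chord'', which is exactly where the exposedness (uniqueness of the supporting contact), rather than mere antipodality, enters. I expect this last step to be the main obstacle in the sense that it is the one place where the hypothesis is genuinely used; everything else is a one-line convexity estimate.
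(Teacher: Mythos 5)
Your proof is correct and is essentially the paper's argument in analytic dress: the paper confines the second diameter to the slab between the two hyperplanes supporting the first, compares lengths along the common direction, and then uses the singleton contacts, which is exactly what your $t\le 1$, $t\ge 1$, and equality analysis with the functionals $u_1,u_2$ accomplish. The only cosmetic difference is that the paper fixes a length ordering WLOG and works with one pair of supporting hyperplanes, whereas you use both pairs symmetrically.
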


\begin{proof}
Assume for a moment that $K$ has a pair of distinct parallel
exposed diameters, say $[x_1, z_1]$ and $[x_2, z_2]$. We may
suppose that $x_1 - z_1$ and $x_2 - z_2$ have the same direction
and $\| x_1 - z_1 \| \le \| x_2 - z_2 \|$. Denote by $H$ and $G$
distinct parallel hyperplanes both supporting $K$ such that $H
\cap K = \{x_1\}$ and $G \cap K = \{z_1\}$. Let $[x_2', z_2']$ be
the intersection of the line $(x_2, z_2)$ and the closed slab
between $G$ and $H$. Clearly, $\| x_2' - z_2' \| = \| x_1 - z_1
\|$. Since $[x_2, z_2] \subset K$, we conclude that $[x_2, z_2]
\subset [x_2', z_2']$. Then $[x_2, z_2] = [x_2', z_2']$ because of
$\| x_1 - z_1 \| \le \| x_2' - z_2' \|$. Hence $x_2 \in H$ and
$z_2 \in G$. Due to $H \cap K = \{x_1\}$ and $G \cap K = \{z_1\}$,
we obtain $[x_1, z_1] = [x_2, z_2]$, in contradiction with the
choice of these diameters.
\end{proof}

We conclude this section with necessary definitions, notions, and
statements (see, e.\,g., \cite{web94} for general references).  In
a standard way, $\Bd K$, $\Cl K$, and $\Int K$ denote the
boundary, the closure, and the interior of a convex set $K \subset
\R^n$; the \textit{recession cone} of $K$ is defined by
\[
\Rec K = \{ y \in \R^n \mid x + \alpha y \in K \ \hbox{for all} \
x \in K \ \mathrm{and} \ \alpha \ge 0\}.
\]
It is well-known that $\Rec K \ne \{o\}$ if and only if $K$ is
unbounded. The \textit{linearity spaces} $\Lin K$ of $K$ is given
by $\Lin K = (\Rec K) \cap (-\Rec K)$, and $K$ can be expressed as
the direct sum $K = \Lin K \oplus (K \cap M)$, where the subspace
$M$ is the orthogonal complement of $\Lin K$ and $K \cap M$ is a
line-free closed convex set

We say that a closed halfspace $P$ of $\R^n$ supports $K$ provided
the boundary hyperplane of $P$ supports $K$ and the interior of
$P$ is disjoint from $K$. If the halfspace $P$ is given by $P =
\{x \in \R^n \mid x \! \cdot \! f \ge \alpha\}$ where $f$ is a
unit vector and $\alpha$ is a scalar, then $f$ is called the
\emph{inward unit normal} of $P$. Closed halfspaces $S$ and $T$ in
$\R^n$ are called \textit{opposite} provided they can be written
as
\begin{equation} \label{eqn1}
S = \{x \in \R^n \mid x \! \cdot \! g \ge \alpha \} \quad
\mathrm{and} \quad T = \{x \in \R^n \mid x \! \cdot \! g \le \beta
\}
\end{equation}
for a suitable unit vector $g \in \R^n$ and scalars $\alpha \ge
\beta$. Clearly, the boundary hyperplanes of opposite halfspaces
are parallel. A \textit{plane} in $\R^n$ is a set of the form $F =
z + L$, where $z \in \R^n$ and $L$ is a subspace of $\R^n$. For
any plane $L \subset \R^n$, we denote by $\pi_L(X)$ the orthogonal
projection of a set $X \subset \R^n$ on $L$. To distinguish
similarly looking elements, we write $0$ for the real number zero,
and $o$ for the origin of $\R^n$.

\section{Proof of Theorem~\ref{th2}}

\noindent We precede the proof of Theorem~\ref{th2} by two
necessary lemmas. One might observe that an alternative proof of
Lemma~\ref{lemma-21} can use a duality argument and the fact the
set of regular points of a convex body $K \subset \R^n$ is dense
in $\Bd K$ (see also~\cite{sol06}).

\begin{lemma} \label{lemma-21}
Let $K \subset \R^n$ be a compact convex set and $f$ be a unit
vector in $\R^n$. For any $\varepsilon > 0$, there is a closed
halfspace $P \subset \R^n$ such that $K \cap P$ is a singleton and
the inward unit normal $g$ of $P$ satisfies the inequality $\| f -
g \| \le \varepsilon$.
\end{lemma}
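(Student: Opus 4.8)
The plan is to realize the desired singleton $K \cap P$ as a face exposed by a supporting halfspace whose normal is close to $f$, using the fact that exposed points are "almost as dense" as we need in directional terms. Concretely, first I would fix the supporting halfspace $Q = \{x \in \R^n \mid x \cdot f \ge \mu\}$ of $K$ with inward normal $f$, where $\mu = \min\{x \cdot f \mid x \in K\}$, and let $F = K \cap \Bd Q$ be the corresponding exposed face. If $F$ is already a singleton we are done with $g = f$, so assume $\dim F \ge 1$. The set $F$ is itself a compact convex set, and the idea is to perturb $f$ slightly so as to single out one point of $F$: pick any exposed point $v$ of $F$ relative to the plane $\Aff F$ (such a point exists by Straszewicz's theorem, or just by taking a vertex of a maximal-dimensional simplex in $F$). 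Then there is a vector $h$ parallel to $\Lin(\Aff F - v)$ (i.e.\ $h$ lies in the direction space of $\Aff F$) with $h \cdot v < h \cdot x$ for all $x \in F \setminus \{v\}$.

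Next I would form $g_t = (f + t h)/\| f + t h\|$ for small $t > 0$ and consider the halfspace $P_t$ with inward normal $g_t$ supporting $K$. The key claim is that for all sufficiently small $t > 0$, $K \cap \Bd P_t = \{v\}$. To see this, note that minimizing $x \cdot (f + th)$ over $K$ is, to first order in $t$, a two-stage process: the minimizers must first minimize $x \cdot f$ (forcing them into $F$, once $t$ is small enough that points outside a neighborhood of $F$ are excluded — here compactness of $K$ and continuity give a uniform gap), and among points of $F$ they must minimize $x \cdot h$, which by the choice of $h$ forces them to equal $v$. Making this rigorous requires the standard estimate: there is $\delta > 0$ with $x \cdot f \ge \mu + \delta$ for every $x \in K$ at distance $\ge \rho$ from $F$ (for any fixed $\rho > 0$), so choosing $t$ with $t \cdot (\sup_K |h \cdot x| - \inf_K h\cdot x) < \delta$ keeps the minimizer within distance $\rho$ of $F$; then a compactness/limit argument as $\rho, t \to 0$ pins the minimizer to the unique $h$-minimizer $v$ of $F$. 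Finally, $\| f - g_t \| \to 0$ as $t \to 0$, so we choose $t$ small enough that also $\| f - g_t \| \le \varepsilon$, and set $P = P_t$.

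The main obstacle is the two-stage minimization argument — specifically, showing that the exact minimizer of the perturbed functional (not just an approximate one) lands exactly on $v$ rather than merely near it. The clean way around this is to avoid claiming it for a single explicit $t$ and instead argue by contradiction: if for a sequence $t_k \to 0^+$ the face $K \cap \Bd P_{t_k}$ contained a point $w_k \ne v$, extract a convergent subsequence $w_k \to w \in K$; then $w \in F$ (since $w \cdot f = \lim w_k \cdot g_{t_k}\|f + t_k h\| $ forces $w \cdot f = \mu$ by comparison with $v$), and $w \cdot h \le v \cdot h$ (comparing the perturbed functionals at $w_k$ and $v$ and dividing by $t_k$), whence $w = v$ by the exposure property of $v$ in $F$; but one must also rule out $w = v$ with $w_k$ a genuinely different point sharing the face, which follows because once $w_k$ is close enough to $v$ it lies in a neighborhood where $\Bd P_{t_k}$ can meet $F$ only at $v$. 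This contradiction yields a valid $t$. A minor technical point worth stating carefully: the perturbation $h$ should be chosen inside the direction subspace of $\Aff F$ so that it does not disturb the first-order behavior transverse to $F$; equivalently one may take $h$ to be the component of an arbitrary exposing vector of $v$ orthogonal to $f$.
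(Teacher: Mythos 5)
Your key claim --- that for all sufficiently small $t>0$ the supporting halfspace $P_t$ with inward normal $g_t=(f+th)/\|f+th\|$ satisfies $K\cap\Bd P_t=\{v\}$ --- is false in general, and this is where the proof breaks. A planar counterexample: let $K\subset\R^2$ be the convex hull of the unit disk and the point $q=(0,2)$, let $f$ be the inward normal of the edge $[p,q]$, where $p$ is the point at which that edge touches the circle, and take $v=p$, $h=(q-p)/\|q-p\|$ (the admissible choice for this endpoint). Then $v$ is an exposed point of the exposed face $F=[p,q]$, but $v$ is not an exposed point of $K$ at all: the only supporting line of $K$ at $p$ is the edge line, which meets $K$ in the whole segment $[p,q]$. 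Hence no halfspace whatsoever, let alone $P_t$, meets $K$ exactly in $\{v\}$; in fact for every $t>0$ the set $K\cap\Bd P_t$ is a point of the circular arc different from $v$. Your limit argument does show that minimizers of $x\cdot(f+th)$ converge to $v$ as $t\to0^+$ --- that part is fine --- but the concluding step, that once $w_k$ is close enough to $v$ it lies in a neighborhood where $\Bd P_{t_k}$ can meet $F$ only at $v$, is a non sequitur: $w_k$ need not lie in $F$ at all (in the example it lies on the arc), and nothing forces the face $K\cap\Bd P_{t_k}$ to collapse to $\{v\}$ rather than merely to approach it. Note that your claim, if true, would prove that every exposed point of an exposed face of $K$ is exposed in $K$, which is exactly what this example refutes.

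The argument also cannot be saved simply by weakening the claim to ``$K\cap\Bd P_t$ is \emph{some} singleton for some small $t$'': restricting to the one-parameter family $f+th$ is too rigid. For instance, in $\R^3$ let $F=D\times\{0\}$ with $D$ the unit disk centered at $(1,0)$, so that $v=o$ is exposed in $F$ by $h=e_1$, and let $K=\Conv\bigl(F\cup\bigcup_k\sigma_k\bigr)$ where $\sigma_k=\{(-2\tau_k,\,y,\,\tau_k^2):|y|\le\tau_k\}$ and $\tau_k\downarrow0$. The face of $K$ exposed by $f=e_3$ is $F$, yet for every small $t>0$ the face exposed by $e_3+te_1$ is one of the segments $\sigma_k$ (or the hull of two of them), never a singleton; so with this perfectly legitimate choice of $v$ and $h$ no value of $t$ works. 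Some genuinely different mechanism is needed to produce an exposed point with controlled normal. The paper's proof avoids perturbing within a fixed plane of directions altogether: it encloses $K$ in a large ball whose outward normals over the relevant cap differ from $f$ by at most $\varepsilon$, slides the ball in the direction $-f$ until it first touches $K$, and takes $P$ to be the tangent halfspace of the ball at a touching point; since $K$ lies inside the ball, $K\cap P$ is automatically a singleton. You would need this rolling-ball idea, or some other new ingredient (e.g.\ a genuine density argument for directions exposing singletons), to close the gap.
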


\begin{proof}
Let $Q \subset \R^n$ be the closed halfspace with inward unit
normal $f$ that supports $K$. Denote by $H$ the boundary
hyperplane of $Q$. Choose a point $v \in H \cap K$, and let $U
\subset H$ be an $(n - 1)$-dimensional closed ball with center $v$
and radius $\delta > 0$ such that the orthogonal projection of $K$
on $H$ lies in $U$. Let $l$ be the line through $v$ in the
direction of $f$. Then $K$ entirely lies in the both-way infinite
cylinder $C$ with base $U$ and axis $l$. Choose a closed ball
$B_\rho(c)$ with center $c \in l \setminus Q$ and radius $\rho >
0$ such that $K \cup U \subset B_\rho(c)$. Furthermore, we assume
that $\rho \ge \delta \! \cdot \! \sec \gamma$ where $\gamma = 2
\arcsin (\varepsilon/2)$. If $y$ is a boundary point of
$B_\rho(c)$ that lies in $C \cap Q$ and $e_y \in \R^n$ is the unit
vector such that $y + e_y$ is the outward unit normal of
$B_\rho(c)$ at $y$, then the inequality $\rho \ge \delta \! \cdot
\! \sec \gamma$ easily implies that $\| e_y - f\| \le
\varepsilon$.

By continuity, there is a scalar $\alpha \ge 0$ such that the ball
$B = B_\rho(c) - \alpha f$ contains $K$ and the boundary of $B$
has at least one, say $x$, common point with $K$. Clearly, $x \in
C$. Denote by $P$ the closed halfspace of $\R^n$ such that $B \cap
P = \{ x \}$. By the above, the inward unit normal $g$ of $P$
satisfies the inequality $\| f - g \| \le \varepsilon$.  Finally,
$K \cap P = B \cap P = \{ x \}$ implies that $K \cap P$ is a
singleton (that is, $x$ is an exposed point of $K$).
\end{proof}

\begin{lemma} \label{lemma-22}
Let $K \subset \R^n$ be a compact convex set with more than one
point and $f$ be a unit vector in $\R^n$. For any $\varepsilon
> 0$, there is a unit vector $g \in \R^n$ and opposite
closed halfspaces $P$ and $Q$ both orthogonal to $g$ and
supporting $K$ such that $\| f - g \| \le \varepsilon$ and the
sets $K \cap S$ and $K \cap T$ are distinct singletons.
\end{lemma}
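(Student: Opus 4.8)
The plan is to reduce the two-sided assertion of Lemma~\ref{lemma-22} to the one-sided Lemma~\ref{lemma-21} by passing to the difference set $DK = K - K = \{ a - b \mid a, b \in K \}$. Write $h_X(u) = \max_{y \in X} y \! \cdot \! u$ for the support function of a nonempty compact set $X \subset \R^n$, and $F_u(X) = \{ y \in X \mid y \! \cdot \! u = h_X(u) \}$ for the face of $X$ exposed by a unit vector $u$. Note that $DK$ is a compact convex set --- a continuous image of $K \times K$, and the Minkowski sum $K + (-K)$ --- centrally symmetric about $o$, and that $DK$ has more than one point because $K$ does.

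First I would apply Lemma~\ref{lemma-21} to $DK$ with the given unit vector $f$ and the given $\varepsilon$, obtaining a closed halfspace $P_0 \subset \R^n$ such that $DK \cap P_0$ is a singleton and the inward unit normal $g$ of $P_0$ satisfies $\| f - g \| \le \varepsilon$. Since $P_0$ supports $DK$, the set $DK \cap P_0$ coincides with $DK \cap \partial P_0 = F_g(DK)$, so $F_g(DK)$ is a single point.

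The key step is the standard identity $F_g(DK) = F_g(K) + F_g(-K)$: indeed, every $a - b$ with $a, b \in K$ satisfies $(a - b) \! \cdot \! g \le h_K(g) + h_{-K}(g) = h_{DK}(g)$, with equality precisely when $a \in F_g(K)$ and $-b \in F_g(-K)$, that is, $b \in F_{-g}(K)$. Hence the Minkowski difference $F_g(K) - F_{-g}(K)$ is a singleton, which forces each of the nonempty sets $F_g(K)$ and $F_{-g}(K)$ to be a singleton, say $F_g(K) = \{ z \}$ and $F_{-g}(K) = \{ x \}$. Moreover $x \ne z$: otherwise the parallel hyperplanes $\{ y \mid y \! \cdot \! g = h_K(g) \}$ and $\{ y \mid y \! \cdot \! g = - h_K(-g) \}$, both supporting $K$ and both containing $x = z$, would coincide, forcing $K$ into a hyperplane orthogonal to $g$ that meets it only at $x$; thus $K = \{ x \}$, contrary to the hypothesis.

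Finally I would assemble the conclusion: put $P = \{ y \in \R^n \mid y \! \cdot \! g \ge h_K(g) \}$ and $Q = \{ y \in \R^n \mid y \! \cdot \! g \le - h_K(-g) \}$. These are closed halfspaces with inward unit normals $g$ and $-g$, both supporting $K$, both orthogonal to $g$, and opposite in the sense of \eqref{eqn1} --- the inequality $h_K(g) \ge - h_K(-g)$ needed there is just the non-negativity of the width of $K$ in the direction $g$. By the above, $K \cap P = F_g(K) = \{ z \}$ and $K \cap Q = F_{-g}(K) = \{ x \}$ are distinct singletons, while $\| f - g \| \le \varepsilon$. I do not anticipate a genuine obstacle: the single idea is the passage to $DK$, which converts the two-sidedness into the one-sidedness already settled by Lemma~\ref{lemma-21}, and everything afterward is routine; the degenerate cases (e.g.\ $\dim K \le 1$) need no separate treatment, since Lemma~\ref{lemma-21} holds for every compact convex set and $DK$ still has more than one point.
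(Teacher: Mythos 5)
Your proposal is correct and follows essentially the same route as the paper: pass to the difference body $K + (-K)$, apply Lemma~\ref{lemma-21} to it, and use the decomposition of the exposed set of a Minkowski sum (the paper writes this as $K^* \cap P = K \cap S + (-K) \cap (-T)$, which is your identity $F_g(DK) = F_g(K) + F_g(-K)$) to get two singleton contact sets, with distinctness coming from $K$ having more than one point. The only difference is notational (support functions and exposed faces versus translated supporting halfspaces), so there is nothing substantive to change.
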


\begin{proof}
Consider the compact convex set $K^* = K + (-K)$. By
Lemma~\ref{lemma-21}, there is a closed halfspace $P \subset \R^n$
such that $K^* \cap P$ is a singleton and the inward unit normal
$g$ of $P$ satisfies the inequality $\| f - g \| \le \varepsilon$.
Furthermore, $K^* \cap P \ne K^*$ since $K$ has more than one
point. Denote by $S$ and $-T$ the closed halfspaces that are
translates of $P$ and support the sets $K$ and $-K$, respectively.
From
\[
K^* \cap P = (K + (-K)) \cap P = K \cap S + (-K) \cap (-T)
\]
we conclude that both sets $K \cap S$ and $(-K) \cap (-T)$ are
singletons. Finally, $K \cap S$ and $K \cap T$ are distinct due to
$K^* \cap P \ne K^*$.
\end{proof}

We start the proof of Theorem~\ref{th2} by considering the set $E$
of antipodally exposed points of $K$. Obviously, $\Cl (\Conv E)
\subset K$; so it remains to show the opposite inclusion. Assume,
for contradiction, the existence of a point $a \in K \setminus \Cl
(\Conv E)$. By the separation properties of convex sets, there is
a closed halfspace $Q \subset \R^n$ that contains $a$ and is
disjoint from $\Cl (\Conv E)$. Denote by $Q'$ the translate of $Q$
that supports $K$. Clearly, $Q' \subset Q$; so $Q' \cap \Cl (\Conv
E) = \varnothing$. We can write $Q' = \{ x \in \R^n \mid x \!
\cdot \! f \ge \gamma \}$, where $f$ is the inward unit normal of
$Q'$ and $\gamma$ is a suitable scalar. Since the set $\Cl (\Conv
E)$ is compact, there is an $\varepsilon > 0$ such that any closed
halfspace $P \subset \R^n$ with inward unit normal $e$ is disjoint
from $\Cl (\Conv E)$ provided $P$ supports $K$ and $\| f - e \|
\le \varepsilon$. By Lemma~\ref{lemma-22}, there is a unit vector
$g$ with $\| f - g \| \le \varepsilon$ and a pair of opposite
closed halfspaces $S$ and $T$ of the form (\ref{eqn1}) such that
$K \cap S$ and $K \cap T$ are distinct singletons. If $K \cap S =
\{ u \}$ and $K \cap T = \{v\}$, then $u$ and $v$ are antipodally
exposed points of $K$. Finally, $S \cap (\Cl \Conv E) =
\varnothing$ implies $u \notin \Cl (\Conv E)$, a contradiction.
\qed

\section{Proof of Theorem~\ref{th1}}

\noindent Obviously, $1) \Rightarrow 2)$. We start the proof of
the converse statement by considering the case when both $K_1$ and
$K_2$ are compact.

\medskip

\noindent \textbf{Case I.} Both $K_1$ and $K_2$ are compact and $2
\le m \le n - 1$.

\medskip

Since 2) trivially implies 1) when both $K_1$ and $K_2$ are
singletons, we may assume, in what follows, that each of $K_1$ and
$K_2$ has more than one point.

\medskip

\noindent \textbf{A)} We consider the case $m = n - 1$ separately,
dividing our consideration into a sequence of steps.

\medskip

\noindent \textbf{1.} First, we state that for any exposed
diameter $[x_1, z_1]$ of $K_1$ and opposite closed halfspaces
$P_1$ and $Q_1$ of $\R^n$ with the property
\[
K_1 \cap P_1 = \{ x_1 \} \quad \mathrm{and} \quad K_1 \cap Q_1 =
\{ z_1 \},
\]
there is an exposed diameter $[x_2, z_2]$ of $K_2$ parallel to
$[x_1, z_1]$ and opposite closed halfspaces $P_2$ and $Q_2$ of
$\R^n$ that are translates of $P_1$ and $Q_1$, respectively, such
that
\[
K_2 \cap P_2 = \{ x_2 \} \quad \mathrm{and} \quad K_2 \cap Q_2 =
\{ z_2 \}.
\]

Indeed, denote by $P_2$ and $Q_2$ some translates of $P_1$ and
$Q_1$, respectively, that support $K_2$. Clearly, $P_2 \cap Q_2 =
\varnothing$. Choose any points $x_2 \in K_2 \cap P_2$ and $z_2
\in K_2 \cap Q_2$. Assume for a moment that $[x_2, z_2]$ is not
parallel to $[x_1, z_1]$. Then the line through $x_1$ parallel to
$[x_2, z_2]$ intersects the hyperplane $\Bd Q_1$ at a point $z_1'$
distinct from $z_1$. Choose in $\Bd Q_1$ a line $l$ through $z_1$
orthogonal to the line $(z_1, z_1')$ and denote by $L$ the
hyperplane through $z_1$ orthogonal to $l$. Clearly, the parallel
$(n - 2)$-dimensional planes $L \cap \Bd P_i$ and $L \cap \Bd Q_i$
are distinct and support the orthogonal projection $\pi_L(K_i)$,
$i = 1, 2$, such that
\begin{gather*}
(L \cap \Bd P_1) \cap \pi_L(K_1) = \{ \pi_L(x_1) \}, \quad  (L
\cap \Bd Q_i) \cap \pi_L(K_1) = \{ \pi_L(z_1)
\}, \\
\pi_L(x_2) \in (L \cap \Bd P_2) \cap \pi_L(K_2), \quad  \pi_L(z_2)
\in (L \cap \Bd Q_2) \cap \pi_L(K_2).
\end{gather*}
By the hypothesis, $\pi_L(K_1)$ and $\pi_L(K_2)$ are homothetic.
Hence there is an exposed diameter $[u,v]$ of $\pi_L(K_2)$
parallel to $[\pi_L(x_1), \pi_L(z_1)]$ such that
\[
(L \cap \Bd P_2) \cap \pi_L(K_2) = \{ u \}, \quad (L \cap \Bd Q_2)
\cap \pi_L(K_2) = \{ v \}.
\]
This gives $\pi_L(x_2) = u$ and $\pi_L(z_2) = v$, which is
impossible because the line segments $[\pi_L(x_1), \pi_L(z_1)]$
and $[\pi_L(x_2), \pi_L(z_2)]$ are not parallel. The obtained
contradiction shows that $[x_2, z_2]$ is parallel to $[x_1, z_1]$
for any choice of $x_2 \in K_2 \cap P_2$ and $z_2 \in K_2 \cap
Q_2$. Hence both sets $K_2 \cap P_2$ and $K_2 \cap Q_2$ are
singletons, which implies that $[x_2, z_2]$ is an exposed diameter
of $K_2$ parallel to $[x_1, z_1]$.

\medskip

\noindent \textbf{2.} Choose an exposed diameter $[x_0, z_0]$ of
$K_1$ and denote by $[x_0', z_0']$ the exposed diameter of $K_2$
parallel to $[x_0, z_0]$ (the uniqueness of $[x_0', z_0']$ follows
from Lemma~\ref{lemma-a1}). Replacing $K_1$ with $K_1 - (x_0 +
z_0)/2$ and $K_2$ with
\[
\lambda (K_2 - (x_0' + z_0')/2), \quad \lambda = \|x_0 - z_0\| /
\|x_0' - z_0'\|,
\]
we may assume that $[x_0, z_0]$ is an exposed diameter for both
$K_1$ and $K_2$, centered at $o$. By \textbf{1} above, both $K_1$
and $K_2$ are supported by opposite closed halfspaces $P_0$ and
$Q_0$ such that
\[
K_1 \cap P_0 = K_2 \cap P_0 = \{x_0\}, \quad K_1 \cap Q_0 = K_2
\cap Q_0 = \{z_0\}.
\]
Applying, if necessary, a suitable affine transformation, we may
assume that both hyperplanes $\Bd P_0$ and $\Bd Q_0$ are
orthogonal to $[x_0, z_0]$. Clearly, the orthogonal projections of
the transformed sets $K_1$ and $K_2$ on any plane are homothetic.

\medskip

\noindent \textbf{3.} We state that any exposed diameter $[x_2,
z_2]$ of $K_2$ is a translate of a suitable exposed diameter
$[x_1, z_1]$ of $K_1$.

\medskip

Since this statement trivially holds when $[x_2, z_2] = [x_0,
z_0]$, we assume, in what follows, that $[x_2, z_2] \ne [x_0,
z_0]$. Let $P_2$ and $Q_2$ be opposite closed halfspaces of $\R^n$
with the property $K_2 \cap P_2 = \{x_2\}$ and $K_2 \cap Q_2 =
\{z_2\}$. Denote by $P_1$ and $Q_1$ translates of $P_2$ and $Q_2$,
respectively, that support $K_1$. By \textbf{1} above, the sets
$K_1 \cap P_1$ and $K_1 \cap Q_1$ are singletons, say, $\{x_1\}$
and $\{z_1\}$, such that $[x_1,z_1]$ and $[x_2,z_2]$ are parallel.
Clearly, $P_1 \ne P_0 \ne P_2$ and $Q_1 \ne Q_0 \ne Q_2$ due to
$[x_2, z_2] \ne [x_0, z_0]$.

Choose a line $l \subset \Bd P_0 \cap \Bd P_1$ and denote by $L$
the hyperplane through $[x_0, z_0]$ orthogonal to $l$. Clearly,
$\pi_L(K_i)$, $i = 1, 2$, is a compact convex set distinct from a
singleton and bounded by two pairs of parallel $(n -
2)$-dimensional planes
\[
L \cap \Bd P_0, \ L \cap \Bd Q_0 \quad \textrm{and} \quad L \cap
\Bd P_i, \ L \cap \Bd Q_i.
\]
This shows that both $[\pi_L(x_0), \pi_L(z_0)]$ and $[\pi_L(x_i),
\pi_L(z_i)]$ are exposed diameters of $\pi_L(K_i)$, $i = 1, 2$.
Since $\pi_L(K_1)$ and $\pi_L(K_2)$ are homothetic and share an
exposed diameter $[\pi_L(x_0), \pi_L(z_0)]$, the set $\pi_L(K_2)$
equals one of the sets $\pi_L(K_1)$, $\pi_L(-K_1)$. In either
case, $[\pi_L(x_2), \pi_L(z_2)]$ is a translate of $[\pi_L(x_1),
\pi_L(z_1)]$. Because $[x_1, z_1]$ and $[x_2, z_2]$ are parallel,
we conclude that $[x_2, z_2]$ is a translate of $[x_1, z_1]$.

\medskip

\noindent \textbf{4.} Our next statement (in continuation of
\textbf{3} above) is that the exposed diameter $[x_2, z_2]$ of
$K_2$ coincides with $[x_1, z_1]$ or with $[-x_1, -z_1]$.

\medskip

Indeed, by the proved in \textbf{3} above, $\pi_L(K_2)$ equals one
of the sets $\pi_L(K_1)$, $\pi_L(-K_1)$; whence its exposed
diameter $[\pi_L(x_2), \pi_L(z_2)]$ coincides with one of the line
segments $[\pi_L(x_1), \pi_L(z_1)]$, $[\pi_L(-x_1), \pi_L(-z_1)]$.
Without loss of generality, we may assume that
\begin{equation} \label{segm1}
[\pi_L(x_2), \pi_L(z_2)] = [\pi_L(x_1), \pi_L(z_1)].
\end{equation}

Let $M$ be the hyperplane through $[x_0, z_0]$ parallel to the $(n
- 2)$-dimen\-sio\-nal plane $\Bd P_0 \cap \Bd P_1$. Denote by $M'$
a hyperplane (distinct from both $L$ and $M$) that contains the
$(n - 2)$-dimensional plane $L \cap M$, and let $P_i'$ and $Q_i'$
be the opposite closed halfspaces of $\R^n$ both supporting $K_i$
whose boundary hyperplanes $\Bd P_i'$ and $\Bd Q_i'$ are parallel
to $M'$, $i = 1, 2$. Consider the hyperplane $L'$ through $L \cap
M$ that forms an angle of $90^\circ$ with $M'$. If $\pi_L'$ is the
orthogonal projection of $\R^n$ onto $L'$, then the homothetic set
$\pi_L'(K_1)$ and $\pi_L'(K_2)$ have $[x_0, z_0]$ as a common
exposed diameter, which implies that $\pi_L'(K_2)= \pi_L'(K_1)$ or
$\pi_L'(K_2) = \pi_L'(-K_1)$. Clearly, the equality $\pi_L'(K_2) =
\pi_L'(K_1)$ gives $P_2' = P_1'$, and the equality $\pi_L'(K_2) =
\pi_L'(-K_1)$ gives $P_2' = -Q_1'$.

Assume, for contradiction, that
\[
[x_1, z_1] \ne [x_2, z_2] \ne [-x_1, -z_1].
\]
Due to (\ref{segm1}), both lines $(x_1, x_2)$ and $(z_1, z_2)$ are
parallel to $l$. Because $x_2$ and $z_2$ are the only points of
contact of $K_2$ with $P_2$ and $Q_2$, respectively, there is an
$\varepsilon_1 > 0$ so small that if the angle $\gamma$ between
$M$ and $M'$ is positive and less than $\varepsilon_1$, then
either $x_1 \in \Int P_2'$ or $z_1 \in \Int Q_2'$. In either case,
$P_1' \ne P_2'$ for all $\gamma \in \, ]0, \varepsilon_1[$; whence
$\pi_L'(K_2) \ne \pi_L'(K_1)$ for all $\gamma \in \, ]0,
\varepsilon_1[$.

Under assumption (\ref{segm1}), we consider two more subcases.

\medskip

\noindent \textbf{4a.} \ $[\pi_L(x_2), \pi_L(z_2)] = [\pi_L(-z_1),
\pi_L(-x_1)]$ \ (see part $(i)$ of Figure~\ref{fig-2}).

\medskip

Then $-z_1 \in (x_1, x_2)$ and $-x_1 \in (z_1, z_2)$. As above,
there is a scalar $\varepsilon_2 > 0$ so small that if the angle
$\gamma$ between $M$ and $M'$ is positive and less than
$\varepsilon_2$, then either $-z_1 \in \Int P_2'$ or $-x_1 \in
\Int Q_2'$. In either case, $P_2' \ne -Q_1'$ for all $\gamma \in
\, ]0, \varepsilon_2[$; whence $\pi_L'(K_2) \ne \pi_L'(-K_1)$ for
all $\gamma \in \, ]0, \varepsilon_2[$.

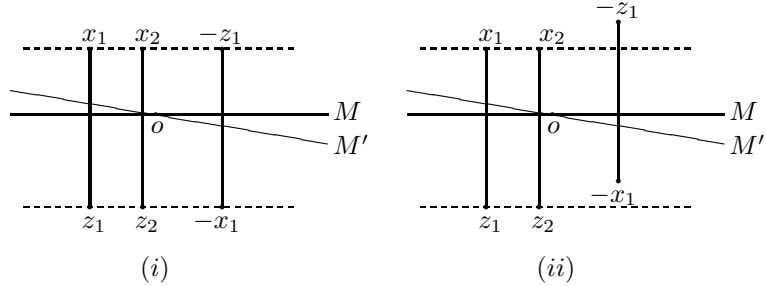
\begin{figure}[h]
\begin{minipage}{11.7cm}

\begin{center}

\begin{picture}(300,105)

\put(25,60){\line(1,0){120}} \put(147,58){$M$} \put(147,45){$M'$}
\put(80,60){\circle{1}} \put(25,69){\line(6,-1){120}}
\put(78,53){$o$}

\put(175,60){\line(1,0){120}} \put(297,58){$M$} \put(297,45){$M'$}
\put(230,60){\circle{1}} \put(175,69){\line(6,-1){120}}
\put(228,53){$o$}

\put(74,-1){$(i)$} \put(224,-1){$(ii)$}

\multiput(30,25)(4,0){26}{\line(1,0){2}}
\multiput(30,85)(4,0){26}{\line(1,0){2}}

\multiput(180,25)(4,0){26}{\line(1,0){2}}
\multiput(180,85)(4,0){26}{\line(1,0){2}}

\thicklines

\put(55,85){\line(0,-1){60}} \put(55,85){\circle{1}}
\put(55,25){\circle{1}}  \put(52,88){$x_1$} \put(52,17){$z_1$}

\put(75,85){\line(0,-1){60}} \put(75,85){\circle{1}}
\put(75,25){\circle{1}}  \put(72,88){$x_2$} \put(72,17){$z_2$}

\put(105,85){\line(0,-1){60}} \put(105,85){\circle{1}}
\put(105,25){\circle{1}}  \put(96,88){$-z_1$} \put(94,17){$-x_1$}

\put(205,85){\line(0,-1){60}} \put(205,85){\circle{1}}
\put(205,25){\circle{1}}  \put(202,88){$x_1$} \put(202,17){$z_1$}

\put(225,85){\line(0,-1){60}} \put(225,85){\circle{1}}
\put(225,25){\circle{1}}  \put(225,88){$x_2$} \put(222,17){$z_2$}

\put(255,95){\line(0,-1){60}} \put(255,95){\circle{1}}
\put(255,35){\circle{1}}  \put(246,98){$-z_1$}
\put(244,27){$-x_1$}

\end{picture}

\end{center}

\end{minipage}
\caption{Illustration of subcases 4a and 4b.} \label{fig-2}
\end{figure}

\noindent \textbf{4b.} \ $[\pi_L(x_2), \pi_L(z_2)] \ne
[\pi_L(-z_1), \pi_L(-x_1)]$ \ (see part $(ii)$ of
Figure~\ref{fig-2}).

\medskip

In particular, $\pi_L(x_2) \ne \pi_L(-z_1)$. Because of $K_2 \cap
P_2 = \{ x_2 \}$ and $(-K_1) \cap (-Q_1) = \{ -z_1 \}$, there is a
scalar $\varepsilon_3 > 0$ such that if the angle $\gamma$ between
$M$ and $M'$ is positive and less than $\varepsilon_3$, then the
compact sets $K_2 \cap P_2'$ and $(-K_1) \cap (-Q_1')$ are small
enough: that is, for any points $u \in K_2 \cap P_2'$ and $v \in
(-K_1) \cap (-Q_1')$,
\begin{equation} \label{4b-1}
\begin{aligned}
\|u - x_2 \| & \le \tfrac{1}{4} \|\pi_L(x_2) - \pi_L(-z_1) \|, \\
\|v - (-z_1) \| &\le \tfrac{1}{4} \|\pi_L(x_2) - \pi_L(-z_1) \|.
\end{aligned}
\end{equation}
By continuity, $\varepsilon_3$ can be chosen so small that
\begin{equation} \label{4b-2}
\|\pi_L'(x_2) - \pi_L'(-z_1) \| \ge  \tfrac{3}{4} \|\pi_L(x_2) -
\pi_L(-z_1)\|
\end{equation}
for all $\gamma \in \, ]0, \varepsilon_3[$. Together with
\[
\| \pi_L'(u) - \pi_L'(x_2) \| \le \| u - x_2 \|, \quad \|
\pi_L'(v) - \pi_L'(-z_1) \| \le \| v - (- z_1) \|,
\]
the inequalities (\ref{4b-1}) and (\ref{4b-2}) give
\begin{equation} \label{4b-3}
\begin{aligned}
&\| \pi_L'(u) - \pi_L'(v) \| \\
& \ge \| \pi_L'(x_2) - \pi_L'(-z_1) \| - \|\pi_L'(u) - \pi_L'(x_2)
\| - \|\pi_L'(v) - \pi_L'(-z_1) \| \\
& \ge \tfrac{3}{4} \| \pi_L(x_2) - \pi_L(-z_1) \| - \| u - x_2 \|
- \| v - (-z_1) \| \\
& \ge \tfrac{1}{4} \| \pi_L(x_2) - \pi_L(-z_1) \|.
\end{aligned}
\end{equation}
Since $\pi_L'(K_2)$ is supported by $P_2'$ and $\pi_L'(-K_1)$ is
supported by $-Q_1'$, which is a translate of $P_2'$, the
inequality (\ref{4b-3}) shows that the contact sets
\begin{gather*}
\pi_L'(K_2) \cap P_2' = \pi_L'(K_2 \cap P_2'), \\
\pi_L'(-K_1) \cap (-Q_1') = \pi_L'((-K_1) \cap (-Q_1'))
\end{gather*}
are disjoint for all $\gamma \in \, ]0, \varepsilon_3[$. Hence
$\pi_L'(K_2) \ne \pi_L'(-K_1)$ for all $\gamma \in \, ]0,
\varepsilon_3[$.

Finally, with $\varepsilon_0 = \min \{ \varepsilon_1,
\varepsilon_2, \varepsilon_3 \}$, we have
\[
\pi_L'(K_1) \ne \pi_L'(K_2) \ne \pi_L'(-K_1) \ \  \mbox{for all} \
\ \gamma \in \, ]0, \varepsilon_0[,
\]
in contradiction with the condition that $\pi_L'(K_2)$ equals one
of the sets $\pi_L'(K_1)$, $\pi_L'(-K_1)$. Thus $[x_2, z_2]$
coincides with $[x_1, z_1]$ or with $[-x_1, -z_1]$.

\medskip

\noindent \textbf{5.} Our concluding statement (in continuation of
\textbf{4}) is that $K_2 = K_1$ or $K_2 = -K_1$.

\medskip

Indeed, assume for a moment that $K_1 \ne K_2 \ne -K_1$. Since
$K_1 \ne K_2$, Theorem~\ref{th2} implies that $K_1$ has an exposed
diameter $[u_1, v_1]$ that is not an exposed diameter of $K_2$.
Then \textbf{4} above implies that $[-v_1, -u_1]$ is a common
exposed diameter of $K_2$ and $-K_1$. In particular, $[u_1, v_1]
\ne [-v_1, -u_1]$. Similarly, $K_2 \ne -K_1$ implies the existence
of an exposed diameter $[-v_0, -u_0]$ of $-K_1$ which is not an
exposed diameter of $K_2$, while $[u_0, v_0]$ is a common exposed
diameter of $K_1$ and $K_2$. Again, $[u_0, v_0] \ne [-v_0, -u_0]$.
By Lemma~\ref{lemma-a1}, $[u_0, v_0]$ and $[u_1, v_1]$ are not
parallel.

Denote by $w$ the middle point of $[u_0, v_0]$ and consider the
sets $K_1' = K_1 - w$ and $K_2' = K_2 - w$. We observe that $w \ne
o$ because of $[u_0, v_0] \ne [-v_0, -u_0]$. The origin $o$ is the
middle point of the exposed diameter $[u_0 - w, v_0 - w]$ of
$K_1'$, which is also an exposed diameter of $K_2'$. By \textbf{4}
above (with $[u_0, v_0]$ instead of $[x_0, z_0]$), we see that
every exposed diameter of $K_2'$ is an exposed diameter of $K_1'$
or $-K_1'$. In particular, the exposed diameter $[-v_1 - w, -u_1 -
w]$ of $K_2'$ should coincide either with the exposed diameter
$[u_1 - w, v_1 - w]$ of $K_1'$ or with the exposed diameter $[-v_1
+ w, -u_1 + w]$ of $-K_1'$.

On the other hand,
\[
[-v_1 - w, -u_1 - w] \ne [u_1 - w, v_1 - w]
\]
due to $[-v_1, -u_1] \ne [u_1, v_1]$, and
\[
[-v_1 - w, -u_1 - w] \ne [-v_1 + w, -u_1 + w]
\]
because of $w \ne o$. The obtained contradiction shows that $K_2 =
K_1$ or $K_2 = -K_1$, which concludes the proof of Case I for $m =
n - 1$.

\medskip

\noindent \textbf{B)} \ Now we assume that $2 \le m < n - 1$. Let
$M \subset \R^n$ be a plane of dimension $m + 1$. For any plane $L
\subset M$ of dimension $m$, we can express $\pi_L$ as the
composition $\pi_L = \pi' \circ \pi_M$, where $\pi'$ is the
orthogonal projection of $M$ onto $L$. This observation and
condition 2) of the theorem imply that the orthogonal projections
of the sets $\pi_M(K_1)$ and $\pi_M(K_2)$ on every $m$-dimensional
plane $L \subset M$ are homothetic. By the proved above (with $m +
1$ instead of $n$), the sets $\pi_M(K_1)$ and $\pi_M(K_2)$ are
homothetic. Since this argument holds for every $(m +
1)$-dimensional plane in $\R^n$, we can replace $m$ with $m + 1$
in condition 2) of the theorem. Repeating this argument finitely
many times, we see that the orthogonal projections of $K_1$ and
$K_2$ on each hyperplane of $\R^n$ are homothetic. By the proved
above, $K_1$ and $K_2$ are homothetic themselves.

\medskip

\noindent \textbf{Case II.} At least one of the sets $K_1$ and
$K_2$ is unbounded and $3 \le m \le n - 1$.

\medskip

Let, for example, $K_1$ be unbounded. Then $\Rec K_1 \ne \{o\}$.
Choose a closed halfline $h$ with apex $o$ that lies in $\Rec K_1$
and an $m$-dimensional subspace $L$ that contains $h$. Then $h
\subset \Rec \pi_L(K_1)$, which shows that $\pi_L(K_1)$ is
unbounded. Since $\pi_L(K_2)$ is homothetic to $\pi_L(K_1)$, the
set $\pi(K_2)$ is also unbounded, which implies that $K_2$ is
unbounded.

\medskip

\noindent \textbf{6.} We state that $\Lin K_1 = \Lin K_2$.

\medskip

Indeed, assume, for example, that $\Lin K_1$ contains a line $l$
through $o$ that does not belong to $\Lin K_2$. Then $l$ does not
lie entirely in $\Rec K_2$, since otherwise $l$ would belong to
$\Lin K_2$. Let $h$ be a halfline of $l$ with apex $o$ that does
not lie in $\Rec K_2$. Because $\Rec K_2$ is a closed convex cone
with apex $o$, there is a closed halfspace $Q$ that contains $\Rec
K_2$ and is disjoint from $h \setminus \{ o \}$. Clearly, $o \in
\Bd Q$. Choose an $(n - m)$-dimensional subspace $N$ in $\Bd Q$,
and denote by $L$ the orthogonal complement to $N$. Clearly, the
line $\pi_L(l)$ lies in $\Rec \pi_L(K_1)$ and does not lie in
$\Rec \pi_L(K_2)$, which belongs to $L \cap Q$. The last is
impossible because $\pi_L(K_1)$ and $\pi_L(K_2)$ are homothetic by
condition 2). Hence $\Lin K_1 \subset \Lin K_2$. Similarly, $\Lin
K_2 \subset \Lin K_1$.

\medskip

\noindent \textbf{7.} Due to \textbf{6} above, both $K_1$ and
$K_2$ can be expressed as
\begin{equation} \label{comp}
K_1 = \Lin K_1 \oplus (K_1 \cap M), \quad K_2 = \Lin K_1 \oplus
(K_2 \cap M),
\end{equation}
where the subspace $M$ is the orthogonal complement of $\Lin K_1$
and both sets $K_1 \cap M$ and $K_2 \cap M$ are line-free.

First assume that $\Dim M \le m$. In this case, we choose an
$m$-dimensional subspace $L \subset \R^n$ that contains $M$.
Clearly,
\[
\pi_L(K_i) = (\Lin K_1 \cap L) \oplus (K_i \cap M), \quad i = 1,
2.
\]
Then $K_1 \cap M$ and $K_2 \cap M$ are homothetic because the sets
$\pi_L(K_1)$ and $\pi_L(K_2)$ are homothetic by the hypothesis.
This and (\ref{comp}) imply that $K_1$ and $K_2$ are homothetic
themselves.

Now assume that $\Dim M > m$. Since $K_1 \cap M$ is line-free, it
contains an exposed point $x$. Translating $K_1$ on the vector
$-x$, we may assume that $o$ is an exposed point of $K_1$. Let $N$
be a subspace of $M$ of dimension $\Dim M - 1$ that supports $K_1
\cap M$ such that $N \cap (K_1 \cap M) = \{o\}$.  Denote by $N_+$
and $N_-$ the opposite closed halfplanes of $M$ bounded by $N$.
Let, for example, $K_1 \cap M \subset N_+$. Denote by $l$ the
1-dimensional subspace of $M$ orthogonal to $N$. Choose an
$m$-dimensional subspace $S$ of $M$ that contains $l$. By the
above, $\pi_S(K_1 \cap M) \subset S \cap N_+$.

\medskip

\noindent \textbf{7a.} If $\pi_S(K_2 \cap M)$ is positively
homothetic to $\pi_S(K_1 \cap M)$, then the recession cones of
$\pi_S(K_1 \cap M)$ and $\pi_S(K_2 \cap M)$ coincide. This shows
that for any other $m$-dimensional subspace $S'$ of $M$ that
contains $l$, the orthogonal projections of $K_1 \cap M$ and $K_2
\cap M$ on $S'$ are positively homothetic. Since $m \ge 3$, it
follows from~\cite{sol05b} that $K_1 \cap M$ and $K_2 \cap M$ are
positively homothetic, and (\ref{comp}) implies that $K_1$ and
$K_2$ are positively homothetic themselves.

\medskip

\noindent \textbf{7b.} If $\pi_S(K_2 \cap M)$ is negatively
homothetic to $\pi_S(K_1 \cap M)$, then the recession cones of
$\pi_S(K_1 \cap M)$ and $\pi_S(K_2 \cap M)$ are symmetric about
$o$. This shows that for any other $m$-dimensional subspaces $S'$
of $M$ that contains $l$, the orthogonal projections of $K_1 \cap
M$ and $K_2 \cap M$ on $S'$ are negatively homothetic. Since $m
\ge 3$, it follows from~\cite{sol05b} that $K_1 \cap M$ and $K_2
\cap M$ are negatively homothetic, and (\ref{comp}) implies that
$K_1$ and $K_2$ are negatively homothetic themselves. \qed

\section{Proof of Corollary~\ref{cor1}}

\noindent Because 1) obviously implies 2), it remains to show that
$2) \Rightarrow 1)$. Let compact convex sets $K_1$ and $K_2$ in
$\R^n$ satisfy condition 2) of the corollary. Choose any
2-dimensional subspace $L \subset \R^n$. Since $\Dim (L + S) \le r
+ 2 \le m$, there is an $m$-dimensional subspace $M$ that contains
$L + S$. By condition 2), $\pi_M(K_1)$ and $\pi_M(K_2)$ are
homothetic. This implies that the orthogonal projections of the
sets $\pi_M(K_1)$ and $\pi_M(K_2)$ onto $L$ are homothetic.
Because $\pi_L = \pi' \circ \pi_M$, where $\pi'$ is the orthogonal
projection of $M$ onto $L$, we conclude that $\pi_L(K_1)$ and
$\pi_L(K_2)$ are homothetic. Now Theorem~\ref{th1} (with $m = 2$)
implies that $K_1$ and $K_2$ are homothetic themselves.

If $K_1$ and $K_2$ are closed convex sets that satisfy condition
2) of the corollary, then repeating the argument above, with any
3-dimensional subspace $L \subset \R^n$ and the respective
inequality $\Dim (L + S) \le r + 3 \le m$, we obtain the homothety
of $K_1$ and $K_2$. \qed



\end{document}